\documentclass[12pt,a4paper,english,reqno]{amsart}
\usepackage[a4paper,footskip=1.5em]{geometry}
\usepackage{amsmath,amssymb,amsthm,mathtools,comment}
\usepackage[mathscr]{euscript}
\usepackage{tikz,babel,enumitem,adjustbox,}
\usepackage[final]{microtype}
\usepackage[numbers]{natbib}
\usetikzlibrary{decorations.pathreplacing,calc,arrows,arrows.meta,patterns,ipe}
\usepackage{hyperref}
\hypersetup{colorlinks=true,linkcolor=blue,citecolor=blue,pdfpagemode=UseNone,pdfstartview={XYZ null null 1.00}}
\usepackage{cmtiup}
\usepackage{amsfonts}
\usepackage{graphicx}
\usepackage{caption}

\pagestyle{plain}
\linespread{1.2}
\setlength{\parskip}{3pt}
\allowdisplaybreaks

\theoremstyle{plain}
\newtheorem{theorem}{Theorem}[section]		
\newtheorem{lemma}[theorem]{Lemma}

\newtheorem{proposition}[theorem]{Proposition}

\theoremstyle{remark}

\def\Prob{\mathbb{P}}
\def\E{\mathbb{E}}
\def\XX{\mathcal{X}}
\def\N{\mathbb{N}}

\def\cond{\,|\,}

\let\emptyset\varnothing

\let\originalleft\left
\let\originalright\right
\renewcommand{\left}{\mathopen{}\mathclose\bgroup\originalleft}
\renewcommand{\right}{\aftergroup\egroup\originalright}

\makeatletter
\def\imod#1{\allowbreak\mkern10mu({\operator@font mod}\,\,#1)}
\makeatother

\title{Slowdown for the geodesic-biased random walk}
\author{Mikhail Beliayeu}
\address{Department of Applied Mathematics, Charles University, Prague, Czech Republic}
\email{mikhail.beliayeu@gmail.com}

\author{Petr Chmel}
\address{Department of Applied Mathematics, Charles University, Prague, Czech Republic}
\email{petr@chmel.net}

\author{Bhargav Narayanan}
\address{Department of Mathematics, Rutgers University, Piscataway, NJ 08854, USA}
\email{narayanan@math.rutgers.edu}

\author{Jan Petr}
\address{Department of Applied Mathematics, Charles University, Prague, Czech Republic}
\email{xpetj01@gmail.com}

\date{15 August 2019}
\subjclass[2010]{Primary 60G50; Secondary 60J10, 60C05}

\begin{document}

\maketitle
\begin{abstract}
	Given a connected graph $G$ with some subset of its vertices excited and a fixed target vertex, in the \emph{geodesic-biased random walk} on $G$, a random walker moves as follows: from an unexcited vertex, she moves to a uniformly random neighbour, whereas from an excited vertex, she takes one step along some fixed shortest path towards the target vertex. We show, perhaps counterintuitively, that the geodesic-bias can slow the random walker down exponentially: there exist connected, bounded-degree $n$-vertex graphs with excitations where the expected hitting time of a fixed target is at least $\exp (\sqrt[4]{n} / 100)$.
\end{abstract}
\section{Introduction}
In this paper, we investigate a model of excited random walk on a connected graph, namely \emph{geodesic-biased random walk}, where the excitations are designed to decrease the hitting time of a fixed target vertex. The model originates in the theoretical computer science and computational biology communities~\cite{back1, back2, back3}, and was brought to our attention by Sousi~\citep{perla}. By way of context, let us mention that various matters relating to hitting times --- recurrence and return times~\citep{intro, recur0, recur1, recur2}, speed~\citep{speed1, speed2, speed3} and slowdown~\citep{slow1, slow2} --- have been investigated in a number of different models of excited random walk; for a broad overview, see~\citep{survey1,survey2}.

Geodesic-biased random walk is defined on a connected $n$-vertex graph $G$. Having fixed a starting vertex $a \in V(G)$, a target vertex $b \in V(G)$ and a subset $\XX \subset V(G)$ of excited vertices, a random walker walks from $a$ until she hits $b$ as follows: from an unexcited vertex of $G$, she moves to a uniformly random neighbour, whereas from an excited vertex, she takes one step along some predetermined shortest path to the target vertex $b$. Our focus here is the hitting time $\tau_a (b, \XX)$ i.e., the first time at which the walker hits $b$ starting from $a$ when the set of excited vertices is $\XX$.

When every vertex is excited, i.e., $\XX = V(G)$, the geodesic-biased walk reduces to a deterministic walk along a shortest path to the target vertex, in which case we have $\E[\tau_a(b, V(G))] = O(n)$. On the other hand, when no vertices are excited, i.e., $\XX = \emptyset$, the geodesic-biased walk reduces to the simple random walk on $G$, and an old result of Lawler~\citep{Lawler} gives a uniform polynomial bound (see also~\citep{KarpLovasz, BrightwellWinkler}) for the expected hitting time of $\E[\tau_a(b, \emptyset)] = O(n^3)$. Many of the existing results in the literature~\citep{back1, back2, back3} show that the expected hitting time of a fixed target in the geodesic-biased walk, for various graphs $G$ and random choices of the set $\XX$ of excited vertices, is significantly smaller than Lawler's uniform bound. Motivated by this, we shall investigate how much the geodesic-bias can decrease the hitting time of a fixed target.

While the geodesic-bias ostensibly aims to decrease hitting times, it is actually not hard to construct examples where the expected hitting time of a fixed target in the geodesic-biased walk is \emph{slightly larger} than the expected hitting time in the analogous simple random walk. To wit, consider a graph where two vertices $a$ and $b$ are connected by two paths of lengths $2$ and $3$, with the middle vertex of the shorter path being attached to a `trap', say a large clique; here, it is not hard to see that exciting $a$ increases the expected hitting time of $b$, since the random walker ends up spending more time in the `trap'. However, the digraph formed by taking a shortest path from each vertex to a fixed target is acyclic, so one cannot string together multiple such `traps' in a cyclic fashion; in particular, such constructions cannot hope to slow the geodesic-biased walk down by more than a constant factor in comparison to the simple random walk.

In the light of the above discussion, it is natural to ask if the results in~\citep{back1, back2, back3} are indicative of a broader phenomenon, and if there is a uniform polynomial bound for the expected hitting time of a fixed target in the geodesic-biased walk, much like Lawler's bound~\citep{Lawler} for the simple random walk. Our first result shows, perhaps surprisingly, that this is not the case: even a single excitation can cause an exponential slowdown.

\begin{theorem}
	\label{thm:unbounded}
	For infinitely many $n \in \N$, there exists a connected graph $G$ on $n$ vertices with $a,b\in V(G)$ such that
	\[\E[\tau_a(b, \{a\})]  = \Omega\left(\exp\left(\frac{\sqrt[4]{n} \log n}{100}\right)\right).\]
\end{theorem}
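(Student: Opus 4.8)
The plan is to exploit the single feature that separates the geodesic-biased walk from a reversible chain. Since $\XX=\{a\}$ and the walker starts at $a$, the vertex $a$ behaves as a \emph{one-way valve}: she may enter $a$ from any neighbour, but every time she is at $a$ she is sent deterministically to the fixed vertex $v$ that begins a predetermined shortest $a$--$b$ path. Thus the process is simple random walk on $G$ in which the out-transitions of $a$ have been overwritten by ``jump to $v$''; this chain is \emph{not} reversible, which is exactly why Lawler's bound does not apply to it. I would record a renewal at every visit to $v$, with the convention that reaching $a$ counts as an immediate return to $v$. By the strong Markov property the successive $v$-epochs are i.i.d.\ until absorption at $b$, so writing
\[
q=\Prob_v\!\left(\text{the walk reaches }b\text{ before it next returns to }v\right),
\]
the number of epochs stochastically dominates a geometric variable of mean $1/q$, and since each epoch lasts at least one step, $\E[\tau_a(b,\{a\})]\ge 1/q$. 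It therefore suffices to construct $G$ in which $q$ is super-polynomially small, and the whole problem becomes a statement about hitting \emph{probabilities} rather than hitting times.

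To make $q$ tiny I would lay down a \emph{spine} $v=u_0,u_1,\dots,u_L=b$ with $L=\Theta(\sqrt[4]{n})$, along which the walker must travel to reach $b$, and attach to each interior vertex $u_i$ a \emph{rewind bundle}: roughly $N=\Theta(\sqrt n)$ internally disjoint paths of length $\ell_i=i+1$, each joining $u_i$ to $a$. The growing lengths $\ell_i$ are forced by a distance constraint that is the heart of the construction: because we need $a$ to point to $v=u_0$, the spine endpoint \emph{farthest} from $b$, no short thick connection from an interior $u_i$ to $a$ is allowed, as it would shortcut the $a$--$b$ distance and redirect the geodesic. With $\ell_i=i+1$ every route $a\to u_i\to\cdots\to b$ has length exactly $L+1$, matching $a\to v\to\cdots\to b$, so the latter is a legitimate shortest path that we predetermine; the forced first step $a\to v$ then dumps the walker at the far end of the spine. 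The bundle widths provide the drift: the effective conductance from $u_i$ towards $a$ exceeds the single forward edge $u_iu_{i+1}$ by a factor $\Theta(N/\ell_i)=\Theta(\sqrt n/i)$, so from $u_i$ she is swept into a bundle, reaches $a$, and is reset to $v$ with probability $1-O(\ell_i/N)$, advancing to $u_{i+1}$ only with probability $\gamma_i=O(\ell_i/N)$. A total of $\sum_i N\ell_i\approx N\cdot L^2/2=\Theta(n)$ vertices are used, and the degrees are unbounded, which is permitted here.

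For the analysis I would integrate out the bundles and self-loops by gambler's ruin to obtain a reduced spine walk with forward, backward and reset probabilities $\gamma_i,\gamma_i,1-2\gamma_i$, and then bound $q$ by exhibiting the increasing potential $\phi_i=\prod_{k=i}^{L-1}(N/\ell_k)$ and checking that it is superharmonic for this reduced chain; optional stopping at the hitting time of $\{v,b\}$ yields $q\le \phi_0/\phi_L=\prod_{i=1}^{L-1}(c\,\ell_i/N)$. Taking logarithms gives $\log(1/q)\ge \sum_{i=1}^{L-1}\log(N/(c\,\ell_i))=\tfrac{L}{2}\log n-L\log L+O(L)=\Theta(\sqrt[4]{n}\log n)$, whence $\E[\tau_a(b,\{a\})]\ge 1/q\ge \exp(\Omega(\sqrt[4]{n}\log n))$, with a leading constant comfortably larger than $1/100$. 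The main obstacle I anticipate is precisely the tension resolved in the construction above: the geodesic constraint forces $a$ to point into the region farthest from $b$, while a strong, \emph{repeatable} backward bias wants thick short connections to $a$ that would collapse that distance. Decoupling random-walk conductance from graph distance (via the length profile $\ell_i=i+1$ together with the width $N$) while keeping the vertex budget at $n$ is the delicate point; a secondary difficulty is that, the valve chain being non-reversible, standard effective-resistance bounds are unavailable, so the escape probability must be controlled by a direct superharmonic/optional-stopping argument rather than by electrical-network machinery.
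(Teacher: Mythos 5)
Your construction is, up to a shift of indices and a tie in distances, exactly the one in the paper: a spine of length $\Theta(n^{1/4})$ from $a$ to $b$, with each spine vertex tethered back to $a$ by $\Theta(\sqrt n)$ internally disjoint paths whose lengths grow linearly so that the spine remains a (predetermined) shortest $a$--$b$ path. In the paper the bundle path to the $i$-th spine vertex has length $i+1$ against a spine distance of $i$, so the geodesic is unique; you make the two distances equal and invoke the ``some fixed shortest path'' clause of the model --- both are fine. Where you genuinely diverge is the analysis. The paper runs a direct induction on expected hitting times, proving $T(a,v_{j+1})\ge \tfrac{k}{4j}T(a,v_j)$ via the first-step decomposition at $v_j$ together with the gambler's-ruin estimate, and concludes $T(a,b)\ge k^m/(4^m m!)$. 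You instead reduce everything to a hitting \emph{probability}: renew at $v$ (every visit to $a$ instantly returns the walker to $v$), observe $\E[\tau]\ge 1/q$ where $q$ is the escape probability to $b$, collapse the bundles by gambler's ruin to a birth--death chain with killing, and bound $q\le\prod_i 2\gamma_i$ with $\gamma_i=\Theta(\ell_i/N)$. This is sound --- the monotonicity $h_{i-1}\le h_i\le h_{i+1}$ of the killed chain's harmonic function gives $h_i\le 2\gamma_i h_{i+1}$ and hence the product bound, and the arithmetic $\log(1/q)=\Theta(n^{1/4}\log n)$ checks out --- and it is in fact the strategy the paper uses for its \emph{second} theorem (the bounded-degree case), where $T(v_1,b)\ge 1/p$ is the starting point. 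What the paper's induction buys is that it never needs to control the whole excursion at once; what your route buys is a cleaner conceptual picture (hitting times reduce to escape probabilities) at the cost of having to justify the chain reduction. One cosmetic slip: with $\phi_i=\prod_{k=i}^{L-1}(N/\ell_k)$ the optional-stopping conclusion should read $q\le \phi_L/\phi_0$, not $\phi_0/\phi_L$; the quantity you then write out, $\prod_{i=1}^{L-1}(c\,\ell_i/N)$, is the correct one, so this is a typo rather than a gap.
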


The construction proving Theorem~\ref{thm:unbounded} produces graphs of unbounded degree. In the context of the simple random walk, bounded-degree graphs are known to behave somewhat differently from those of unbounded degree; for example, as shown by Lawler~\citep{Lawler}, expected hitting times in a bounded-degree $n$-vertex graph are $O(n^2)$. Our second result, also in the spirit of Theorem~\ref{thm:unbounded}, shows that exponential slowdown is unavoidable on graphs of bounded degree as well, though more excitations are required in this case.
\begin{theorem}
	\label{thm:bounded}
	For infinitely many $n \in \N$, there exists a connected graph $G$ on $n$ vertices of maximum degree $3$ with $a,b\in V(G)$ and a set $\XX \subset V(G)$ of $O(\sqrt{n})$ excited vertices such that
	\[\E[\tau_a(b, \XX)]  = \Omega\left(\exp\left(\frac{\sqrt[4]{n}}{100}\right)\right).\]
\end{theorem}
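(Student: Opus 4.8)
The plan is to realise, inside a cubic graph, an effective backward-biased nearest-neighbour walk on a line of length $k=\Theta(\sqrt[4]{n})$, and then to invoke the standard fact that such a walk is exponentially slow. Concretely, I will build a spine of $k$ consecutive blocks $B_1,\dots,B_k$, with $a$ sitting before $B_1$ and $b$ after $B_k$, arranged so that whenever the walker passes from one block to the next its coarse position performs a step of a birth--death chain on $\{0,1,\dots,k\}$ whose forward probability $p$ is bounded above by a constant $<1/2$, uniformly over blocks and over the past. Granting this, writing $q=1-p$, the expected number of inter-block crossings before the index first reaches $k$ is at least $(q/p)^{k}/\mathrm{poly}(k)=\exp(\Omega(k))$; since each crossing costs at least one step of the walk, $\E[\tau_a(b,\XX)]\ge \exp(\Omega(\sqrt[4]{n}))$, and the constants can be tuned to give the stated bound $\exp(\sqrt[4]{n}/100)$.

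The heart of the matter is a single block $B_i$: a degree-$3$ gadget with one entry port (shared with $B_{i-1}$) and one forward port (shared with $B_{i+1}$) which, using only $O(\sqrt[4]{n})$ excited vertices, ejects the walker through the entry port with probability at least a fixed $q>1/2$. The difficulty is that a cubic gadget left to the simple random walk is essentially symmetric, so the bias must be manufactured by the excitations; and the only lever an excitation provides is to force a step along a \emph{shortest path to $b$}, which by definition moves the walker closer to $b$. My plan is therefore to hang a trap --- a path, or a shallow binary tree --- off the forward side of each block, and to excite the deep vertices of this trap so that for each of them the first edge of a geodesic to $b$ points \emph{back up the trap}, towards the block entry. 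A walker that strays forward into the trap is then marched deterministically backwards, step by step, out of the block; arranging that each forward attempt has a constant chance of being diverted into the trap yields the required constant backward bias.

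The main obstacle, which I expect to occupy the bulk of the work, is to make these geodesics genuinely point backwards in the \emph{assembled} graph without simultaneously handing the random walker a fast route to $b$. For the forced steps to march the walker backwards there must be a short return route to $b$ running back through the already-traversed blocks; but if that same route were freely usable by the unbiased walk it would short-circuit the whole construction. The resolution I will pursue is to pad the forward corridors so that the return route is a genuine geodesic from every excited trap vertex, yet is \emph{repelling} for the simple random walk --- entered only through narrow necks and against an effective drift --- so that only the deterministic forced steps traverse it efficiently. Keeping the geodesic pointing backwards all the way out of a trap of depth $\Theta(\sqrt[4]{n})$ is precisely what forces each block to carry $\Theta(\sqrt[4]{n})$ excited vertices; with $k=\Theta(\sqrt[4]{n})$ blocks this accounts for the total budget of $O(\sqrt{n})$ excitations, while spreading $O(n^{3/4})$ vertices across each block keeps the whole graph on $\Theta(n)$ vertices of maximum degree $3$.

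Finally, I will convert the per-block estimate into the global bound by a strong Markov argument over successive boundary crossings: at each moment that the walker occupies a port between two blocks, the probability that its next excursion ends at the forward port rather than the backward one is at most $p$, regardless of the past, so the number of crossings the walker needs in order first to reach block $k$ stochastically dominates the hitting time of $k$ for a biased nearest-neighbour walk on $\{0,\dots,k\}$ with forward probability $p$. The elementary birth--death estimate then delivers the exponential lower bound. I would carry the argument out in the order (i) define one block and verify its exit probabilities assuming the geodesics behave, (ii) assemble the blocks and verify the distance and geodesic claims globally, which is the crux, (iii) check the degree, vertex-count and excitation-count bookkeeping, and (iv) run the crossing-domination argument and optimise the constants.
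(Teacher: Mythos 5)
Your high-level architecture --- a spine from $a$ to $b$, traps hanging off the spine, and excited vertices that march a walker who has strayed into a trap back towards $a$ --- is the architecture of the paper's construction, but the quantitative heart of your plan, a \emph{constant} per-block backward bias $q>1/2$, does not survive scrutiny, and the specific mechanism you describe for producing it cannot work. First, a forced step always moves the walker strictly closer to $b$, so a geodesic from a trap vertex that ``points back up the trap, towards the block entry'' must, upon reaching the spine, continue \emph{forward} along the spine (going backward along the spine and then to $b$ is longer than going forward, since every $a$--$b$ route passes through the blocks); such a trap merely costs the walker time and produces no backward displacement. The only way an excitation can return the walker to $a$ is the paper's way: a separate excited conveyor attached to the \emph{bottom} of the trap, reached by the walker wandering deeper into the trap, and for the conveyor-then-spine route to be a genuine geodesic from the trap bottom $w$ one needs $\mathrm{dist}(w,b)\ge \mathrm{dist}(w,a)+\mathrm{dist}(a,b)$, which forces the trap depth to exceed the spine-distance from $a$ to the block it hangs off --- i.e.\ depth $\Omega(L)$ for blocks in the second half of a spine of length $L$, and padding the corridors only makes this worse. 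Second, for the trap shapes you name this kills the constant bias: a simple random walk entering a path-trap reaches depth $\Omega(L)$ before returning with probability only $O(1/L)$ (Proposition~\ref{p:int}), and a \emph{shallow} binary tree does not have the required depth. So the per-block diversion probability is $O(1/L)$, not a constant; with $k=\Theta(\sqrt[4]{n})$ blocks the natural ``survive long enough to cross'' optimization then yields only $\exp(\Theta(\sqrt{k}))=\exp(\Theta(n^{1/8}))$. (A constant escape probability to depth $L$ at degree $3$ is not flatly impossible --- e.g.\ $\Theta(L)$ parallel paths of length $\Theta(L)$ joined by binary trees at both ends has bounded effective resistance and fits your vertex budget --- but that gadget appears nowhere in your write-up and all of the geodesic bookkeeping would have to be redone around it.)

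The paper resolves the tension by giving up on a backward bias altogether. Its spine has $m=\Theta(\sqrt n)$ vertices $v_1,\dots,v_m$, each carrying a single path-trap of length $2m+2$ (just long enough for the backward geodesic condition), all feeding one shared excited conveyor $s_m,\dots,s_1,a$ of size $O(\sqrt n)$. The induced chain on the spine is then \emph{symmetric}, with only a $\varepsilon=1/(4m+5)$ probability per step of being reset to $a$, and the exponential bound comes not from gambler's ruin for a biased chain but from a dichotomy: crossing a symmetric spine of length $m$ in at most $m^{3/2}$ steps has probability at most $m^{3/2}\exp(-\sqrt m/10)$ by a Chernoff bound, while surviving more than $m^{3/2}$ steps without a reset has probability at most $(1-\varepsilon)^{m^{3/2}}\le\exp(-\sqrt m/10)$. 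To repair your proof you would need either to supply and verify the constant-escape gadget above, or to adopt the paper's parameters ($m=\Theta(\sqrt n)$, reset rate $\Theta(1/m)$) and replace the biased-birth--death estimate with this short/long excursion argument.
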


This paper is organised as follows. We give the proofs of Theorems~\ref{thm:unbounded} and~\ref{thm:bounded} in Section~\ref{s:proof}. We conclude with a discussion of some open problems in Section~\ref{s:conc}.

\section{Proofs of the main results}\label{s:proof}

In this section, we prove our two main results. It will be helpful to have some notation. As is usual, we write $[n]$ for the set $\{1, 2, \dots, n\}$. In the geodesic-biased random walk on a graph $G$, when the target vertex $b$ and set $\XX$ of excited vertices are clear from the context, we abbreviate the expected hitting time $\tau_x(y, \XX)$ of $y$ from $x$ by $T(x, y)$.

We shall make use of a well-known Chernoff-type bound.
\begin{proposition}\label{p:chernoff}
	Let $X=X_1 + X_2 + \dots + X_n$, where $X_1, X_2, \dots, X_n$ are independent Bernoulli random variables. Writing $\mu=\E[X]$, we have
	\[\Prob(X\geq (1+\delta)\mu)\leq\exp\left(\frac{-\delta^2\mu}{2+\delta}\right)\] for all $\delta>0$. \qed
\end{proposition}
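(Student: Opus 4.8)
The plan is to use the standard exponential moment method, often called the Chernoff--Cram\'er approach. The first step is to fix a parameter $t > 0$ and apply Markov's inequality to the nonnegative random variable $e^{tX}$, which gives
\[\Prob(X \geq (1+\delta)\mu) = \Prob\left(e^{tX} \geq e^{t(1+\delta)\mu}\right) \leq e^{-t(1+\delta)\mu}\, \E\!\left[e^{tX}\right].\]
The second step exploits independence to factor the moment generating function as $\E[e^{tX}] = \prod_{i=1}^{n} \E[e^{tX_i}]$. Writing $p_i = \Prob(X_i = 1)$, each factor is $\E[e^{tX_i}] = 1 + p_i(e^t - 1)$, which I would bound using the elementary inequality $1 + x \leq e^x$ to obtain $\E[e^{tX_i}] \leq \exp\!\left(p_i(e^t - 1)\right)$. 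Multiplying these and using $\mu = \sum_i p_i$ yields $\E[e^{tX}] \leq \exp\!\left(\mu(e^t - 1)\right)$, and hence $\Prob(X \geq (1+\delta)\mu) \leq \exp\!\left(\mu\left(e^t - 1 - (1+\delta)t\right)\right)$ for every $t > 0$.

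The third step is to optimise the free parameter $t$. Differentiating the exponent in $t$ and setting it to zero gives $e^t = 1 + \delta$, so the optimal choice is $t = \ln(1+\delta) > 0$; this is legitimate precisely because $\delta > 0$. Substituting back produces the classical multiplicative Chernoff bound
\[\Prob(X \geq (1+\delta)\mu) \leq \exp\!\left(\mu\left(\delta - (1+\delta)\ln(1+\delta)\right)\right).\]

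It therefore remains to simplify this into the stated form, and this final analytic step is where the real work lies. Concretely, I would prove the inequality
\[\delta - (1+\delta)\ln(1+\delta) \leq -\frac{\delta^2}{2+\delta} \qquad \text{for all } \delta > 0,\]
which immediately upgrades the classical bound to the claimed $\exp\!\left(-\delta^2\mu/(2+\delta)\right)$. To establish it, I would set $h(\delta) = (1+\delta)\ln(1+\delta) - \delta - \delta^2/(2+\delta)$, note that $h(0) = 0$, and show $h'(\delta) \geq 0$ for $\delta > 0$; after differentiating, this reduces to verifying $\ln(1+\delta) \geq \delta(4+\delta)/(2+\delta)^2$, which again follows by a monotonicity argument since both sides vanish at the origin and the derivative comparison holds thereafter. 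The main obstacle is this last inequality: while each individual step is routine calculus, one must be careful to track the monotonicity correctly so that the somewhat delicate rational bound $\delta^2/(2+\delta)$ is genuinely dominated, rather than the cruder $\delta^2/2$ or $\delta^2/3$ variants that appear in weaker formulations.
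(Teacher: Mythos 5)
Your proof is correct: the exponential-moment (Chernoff--Cram\'er) argument is carried out properly, and the concluding calculus step works, since the key inequality reduces to the standard bound $\ln(1+\delta)\ge 2\delta/(2+\delta)$ and your monotonicity checks (in particular $(2+\delta)^3\ge 8(1+\delta)$ for $\delta>0$) all hold. The paper itself states this proposition as a well-known fact and gives no proof, so there is nothing to compare against; your argument is the standard one and fills that gap completely.
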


We also require the following well-known gambler's ruin estimate.
\begin{proposition}\label{p:int}
	The probability that the simple random walk on the interval $\{0,1,\dots,n\}$ started at $1$ visits $n$ before it visits $0$ is $1/n$.\qed
\end{proposition}

We are now ready to give the proof of Theorem~\ref{thm:unbounded}.

\begin{proof}[Proof of Theorem~\ref{thm:unbounded}]
	We build an infinite family of graphs as follows. We fix $k\in\N$, set $m= \lfloor \sqrt{k} \rfloor$, and consider a graph $G$ as follows: we start with a path of length $m+1$ between $a$ and $b$, say $a,v_1,v_2\dots,v_m,b$, and then connect each $v_i$ to $a$ by $k$ disjoint paths of length $i+1$ as shown in Figure~\ref{fig:unbounded}. Formally, we take
	\[ V(G) =\{a,b\}\cup \{ v_1, v_2, \dots, v_m\} \cup\bigcup_{j=1}^m\bigcup_{i=1}^{j}R_{i,j},\] where
	$R_{i,j}=\{r_{i,j,l} : l\in [k]\}$, and specify $E(G)$ as follows:
	\begin{itemize}
		\item $\forall i\in [m-1]:\{v_i,v_{i+1}\}\in E(G)$,
		\item $\forall j\in[m], \forall i\in[j-1], \forall l\in [k]: \{r_{i,j,l}, r_{i,j+1,l}\}\in E(G)\land \{r_{i,1,l},a\}\in E(G)\land \{r_{i,k,l}, v_{i}\}\in E(G)$,
		\item $\{a,v_1\}\in E(G)$ and $\{v_m, b\}\in E(G)$.
	\end{itemize}

	We consider the geodesic-biased random walk on this graph with target $b$ and $\XX = \{a\}$. The unique shortest path to $b$ from $a$ is the path $a,v_1,v_2\dots,v_m,b$, so the random walker always moves to $v_1$ from $a$.

	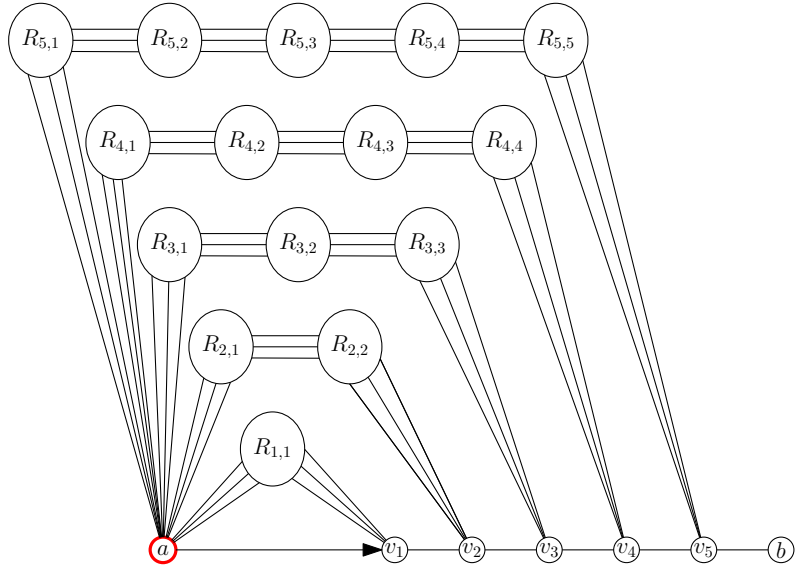
\begin{figure}
		\begin{center}
			\tikzstyle{ipe stylesheet} = [
  ipe import,
  even odd rule,
  line join=round,
  line cap=butt,
  ipe pen normal/.style={line width=0.4},
  ipe pen heavier/.style={line width=0.8},
  ipe pen fat/.style={line width=1.2},
  ipe pen ultrafat/.style={line width=2},
  ipe pen normal,
  ipe mark normal/.style={ipe mark scale=3},
  ipe mark large/.style={ipe mark scale=5},
  ipe mark small/.style={ipe mark scale=2},
  ipe mark tiny/.style={ipe mark scale=1.1},
  ipe mark normal,
  /pgf/arrow keys/.cd,
  ipe arrow normal/.style={scale=7},
  ipe arrow large/.style={scale=10},
  ipe arrow small/.style={scale=5},
  ipe arrow tiny/.style={scale=3},
  ipe arrow normal,
  /tikz/.cd,
  ipe arrows, 
  <->/.tip = ipe normal,
  ipe dash normal/.style={dash pattern=},
  ipe dash dashed/.style={dash pattern=on 4bp off 4bp},
  ipe dash dotted/.style={dash pattern=on 1bp off 3bp},
  ipe dash dash dotted/.style={dash pattern=on 4bp off 2bp on 1bp off 2bp},
  ipe dash dash dot dotted/.style={dash pattern=on 4bp off 2bp on 1bp off 2bp on 1bp off 2bp},
  ipe dash normal,
  ipe node/.append style={font=\normalsize},
  ipe stretch normal/.style={ipe node stretch=1},
  ipe stretch normal,
  ipe opacity 10/.style={opacity=0.1},
  ipe opacity 30/.style={opacity=0.3},
  ipe opacity 50/.style={opacity=0.5},
  ipe opacity 75/.style={opacity=0.75},
  ipe opacity opaque/.style={opacity=1},
  ipe opacity opaque,
]
\definecolor{red}{rgb}{1,0,0}
\definecolor{green}{rgb}{0,1,0}
\definecolor{blue}{rgb}{0,0,1}
\definecolor{yellow}{rgb}{1,1,0}
\definecolor{orange}{rgb}{1,0.647,0}
\definecolor{gold}{rgb}{1,0.843,0}
\definecolor{purple}{rgb}{0.627,0.125,0.941}
\definecolor{gray}{rgb}{0.745,0.745,0.745}
\definecolor{brown}{rgb}{0.647,0.165,0.165}
\definecolor{navy}{rgb}{0,0,0.502}
\definecolor{pink}{rgb}{1,0.753,0.796}
\definecolor{seagreen}{rgb}{0.18,0.545,0.341}
\definecolor{turquoise}{rgb}{0.251,0.878,0.816}
\definecolor{violet}{rgb}{0.933,0.51,0.933}
\definecolor{darkblue}{rgb}{0,0,0.545}
\definecolor{darkcyan}{rgb}{0,0.545,0.545}
\definecolor{darkgray}{rgb}{0.663,0.663,0.663}
\definecolor{darkgreen}{rgb}{0,0.392,0}
\definecolor{darkmagenta}{rgb}{0.545,0,0.545}
\definecolor{darkorange}{rgb}{1,0.549,0}
\definecolor{darkred}{rgb}{0.545,0,0}
\definecolor{lightblue}{rgb}{0.678,0.847,0.902}
\definecolor{lightcyan}{rgb}{0.878,1,1}
\definecolor{lightgray}{rgb}{0.827,0.827,0.827}
\definecolor{lightgreen}{rgb}{0.565,0.933,0.565}
\definecolor{lightyellow}{rgb}{1,1,0.878}
\definecolor{black}{rgb}{0,0,0}
\definecolor{white}{rgb}{1,1,1}
\begin{tikzpicture}[ipe stylesheet, scale=0.6, every node/.style={scale=0.6}]
  \draw
    (84.04, 807.215) ellipse[x radius=19.96, y radius=23.2095];
  \draw
    (164.04, 807.215) ellipse[x radius=19.96, y radius=23.2095];
  \node[ipe node, anchor=center, font=\large]
     at (84.04, 807.209) {$R_{5,1}
$};
  \draw
    (104, 807.211)
     -- (144.08, 807.214);
  \draw
    (244.04, 807.215) ellipse[x radius=19.96, y radius=23.2095];
  \node[ipe node, anchor=center, font=\large]
     at (164.04, 807.209) {$R_{5,2}
$};
  \node[ipe node, anchor=center, font=\large]
     at (244.04, 807.215) {$R_{5,3}$};
  \draw
    (324.04, 807.215) ellipse[x radius=19.96, y radius=23.2095];
  \node[ipe node, anchor=center, font=\large]
     at (324.04, 807.215) {$R_{5,4}$};
  \draw
    (404.04, 807.215) ellipse[x radius=19.96, y radius=23.2095];
  \node[ipe node, anchor=center, font=\large]
     at (404.04, 807.215) {$R_{5,5}$};
  \draw
    (164.04, 679.215) ellipse[x radius=19.96, y radius=23.2095];
  \draw
    (244.04, 679.215) ellipse[x radius=19.96, y radius=23.2095];
  \node[ipe node, anchor=center, font=\large]
     at (164.04, 679.21) {$R_{3,1}
$};
  \draw
    (324.04, 679.215) ellipse[x radius=19.96, y radius=23.2095];
  \node[ipe node, anchor=center, font=\large]
     at (244.04, 679.21) {$R_{3,2}
$};
  \node[ipe node, anchor=center, font=\large]
     at (324.04, 679.215) {$R_{3,3}$};
  \draw
    (132.04, 743.215) ellipse[x radius=19.96, y radius=23.2095];
  \draw
    (212.04, 743.215) ellipse[x radius=19.96, y radius=23.2095];
  \node[ipe node, anchor=center, font=\large]
     at (132.04, 743.21) {$R_{4,1}
$};
  \node[ipe node, anchor=center, font=\large]
     at (212.04, 743.215) {$R_{4,2}$};
  \draw
    (292.04, 743.215) ellipse[x radius=19.96, y radius=23.2095];
  \node[ipe node, anchor=center, font=\large]
     at (292.04, 743.215) {$R_{4,3}$};
  \draw
    (372.04, 743.215) ellipse[x radius=19.96, y radius=23.2095];
  \node[ipe node, anchor=center, font=\large]
     at (372.04, 743.215) {$R_{4,4}$};
  \draw
    (196.04, 615.215) ellipse[x radius=19.96, y radius=23.2095];
  \draw
    (276.04, 615.215) ellipse[x radius=19.96, y radius=23.2095];
  \node[ipe node, anchor=center, font=\large]
     at (196.04, 615.21) {$R_{2,1}
$};
  \node[ipe node, anchor=center, font=\large]
     at (276.04, 615.215) {$R_{2,2}$};
  \draw
    (228.04, 551.215) ellipse[x radius=19.96, y radius=23.2095];
  \node[ipe node, anchor=center, font=\large]
     at (228.04, 551.215) {$R_{1,1}$};
  \draw[red, ipe pen fat]
    (160, 488) circle[radius=8];
  \draw
    (352, 488) circle[radius=8];
  \node[ipe node, anchor=center, font=\large]
     at (160, 488) {$a
$};
  \node[ipe node, anchor=center, font=\large]
     at (352, 488) {$v_2$};
  \draw
    (400, 488) circle[radius=8];
  \node[ipe node, anchor=center, font=\large]
     at (400, 488) {$v_3$};
  \draw
    (496, 488) circle[radius=8];
  \node[ipe node, anchor=center, font=\large]
     at (496, 488) {$v_5$};
  \draw
    (360, 488)
     -- (392, 488);
  \draw
    (408, 488)
     -- (440, 488);
  \draw
    (456, 488)
     -- (488, 488);
  \draw
    (544, 488) circle[radius=8];
  \node[ipe node, anchor=center, font=\large]
     at (544, 488) {$b$};
  \draw
    (536, 488)
     -- (504, 488);
  \draw
    (448, 488) circle[radius=8];
  \node[ipe node, anchor=center, font=\large]
     at (448, 488) {$v_4$};
  \draw
    (304, 488) circle[radius=8];
  \node[ipe node, anchor=center, font=\large]
     at (304, 488) {$v_1$};
  \draw
    (312, 488)
     -- (344, 488);
  \draw[->]
    (168, 488)
     -- (296, 488);
  \draw
    (287.424, 596.15)
     -- (347.899, 494.869);
  \draw
    (332.41, 658.145)
     -- (397.047, 495.435);
  \draw
    (378.568, 721.282)
     -- (445.718, 495.668);
  \draw
    (493.785, 495.687)
     -- (410.38, 785.207);
  \draw
    (298.685, 493.979)
     -- (247.999, 551.001);
  \draw
    (298.054, 493.352)
     -- (245.723, 540.449);
  \draw
    (240.106, 532.726)
     -- (297.446, 492.588);
  \draw
    (219.817, 530.067)
     -- (166.554, 492.588);
  \draw
    (165.315, 493.979)
     -- (209.245, 543.401);
  \draw
    (165.946, 493.352)
     -- (213.115, 535.804);
  \draw
    (185.325, 595.633)
     -- (161.832, 495.787);
  \draw
    (162.409, 495.629)
     -- (192.933, 592.288);
  \draw
    (201.922, 593.036)
     -- (162.966, 495.43);
  \draw
    (160.638, 495.975)
     -- (173.671, 658.886);
  \draw
    (163.55, 656.012)
     -- (160.169, 495.998);
  \draw
    (159.681, 495.994)
     -- (153.134, 659.776);
  \draw
    (129.031, 720.271)
     -- (158.943, 495.93);
  \draw
    (159.129, 495.952)
     -- (134.562, 720.192)
     -- (134.562, 720.192);
  \draw
    (122.21, 723.015)
     -- (158.73, 495.899);
  \draw
    (157.832, 495.701)
     -- (76.1211, 785.91);
  \draw
    (158.148, 495.783)
     -- (89.3617, 784.846);
  \draw
    (97.9798, 790.603)
     -- (158.394, 495.837);
  \draw
    (103.076, 800.234)
     -- (145.052, 800.061);
  \draw
    (144.993, 814.155)
     -- (103.087, 814.155);
  \draw
    (184, 807.211)
     -- (224.08, 807.214);
  \draw
    (183.076, 800.234)
     -- (225.052, 800.061);
  \draw
    (224.993, 814.155)
     -- (183.087, 814.155);
  \draw
    (264, 807.211)
     -- (304.08, 807.214);
  \draw
    (263.076, 800.234)
     -- (305.052, 800.061);
  \draw
    (304.993, 814.155)
     -- (263.087, 814.155);
  \draw
    (344, 807.211)
     -- (384.08, 807.214);
  \draw
    (343.076, 800.234)
     -- (385.052, 800.061);
  \draw
    (384.993, 814.155)
     -- (343.087, 814.155);
  \draw
    (312, 743.211)
     -- (352.08, 743.214);
  \draw
    (311.076, 736.234)
     -- (353.052, 736.061);
  \draw
    (352.993, 750.155)
     -- (311.087, 750.155);
  \draw
    (232, 743.211)
     -- (272.08, 743.214);
  \draw
    (231.076, 736.234)
     -- (273.052, 736.061);
  \draw
    (272.993, 750.155)
     -- (231.087, 750.155);
  \draw
    (152, 743.211)
     -- (192.08, 743.214);
  \draw
    (151.076, 736.234)
     -- (193.052, 736.061);
  \draw
    (192.993, 750.155)
     -- (151.087, 750.155);
  \draw
    (184, 679.211)
     -- (224.08, 679.214);
  \draw
    (183.076, 672.234)
     -- (225.052, 672.061);
  \draw
    (224.993, 686.155)
     -- (183.087, 686.155);
  \draw
    (264, 679.211)
     -- (304.08, 679.214);
  \draw
    (263.076, 672.234)
     -- (305.052, 672.061);
  \draw
    (304.993, 686.155)
     -- (263.087, 686.155);
  \draw
    (216, 615.211)
     -- (256.08, 615.214);
  \draw
    (215.076, 608.234)
     -- (257.052, 608.061);
  \draw
    (256.993, 622.155)
     -- (215.087, 622.155);
  \draw
    (348.556, 495.221)
     -- (294.915, 607.666);
  \draw
    (275.969, 592.006)
     -- (347.279, 494.458);
  \draw
    (494.094, 495.77)
     -- (420.797, 794.604);
  \draw
    (396.995, 785.499)
     -- (493.474, 495.591);
  \draw
    (446.103, 495.772)
     -- (388.781, 730.576);
  \draw
    (365.366, 721.341)
     -- (365.366, 721.341)
     -- (445.329, 495.541);
  \draw
    (319.557, 656.598)
     -- (396.555, 495.22);
  \draw
    (397.541, 495.613)
     -- (341.721, 668.446);
  \draw
    (294.915, 607.666)
     -- (348.556, 495.221);
  \draw
    (347.279, 494.458)
     -- (275.969, 592.006);
\end{tikzpicture}
		\end{center}
		\caption{The construction with $m=5$.}
		\label{fig:unbounded}
	\end{figure}

	\begin{lemma}
		\label{lem:expectation}
		For $1\leq j\leq m+1$, we have $T(a,v_j)\geq\tfrac{k^{j-1}}{4^{j-1}\cdot (j-1)!}$.
	\end{lemma}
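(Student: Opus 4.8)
The plan is to prove the lemma by induction on $j$. The base case $j=1$ is immediate: since $a$ is excited and the unique shortest path to $b$ begins $a,v_1,\dots$, the walker steps deterministically from $a$ to $v_1$, so $T(a,v_1)=1=\tfrac{k^{0}}{4^{0}\cdot 0!}$. For the inductive step I would isolate the single inequality
\[
	T(a,v_{j+1})\ \ge\ \frac{T(a,v_j)}{q_j},
\]
where $q_j$ denotes the probability that the walk started at $v_j$ reaches $v_{j+1}$ (read $v_{m+1}=b$) before it reaches $a$. Granting this together with the bound $q_j\le \tfrac{4j}{k}$, the lemma follows, since then $T(a,v_{j+1})\ge \tfrac{k}{4j}T(a,v_j)\ge \tfrac{k}{4j}\cdot\tfrac{k^{j-1}}{4^{j-1}(j-1)!}=\tfrac{k^{j}}{4^{j}\,j!}$.

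The reduction rests on one structural fact: the first visit of the walker to $v_{j+1}$ is necessarily a step from $v_j$. Indeed, the neighbours of $v_{j+1}$ are $v_j$, $v_{j+2}$, and the $k$ endpoints of the level-$(j+1)$ paths; the latter paths can be entered only from their $v_{j+1}$-end or from their $a$-end, and the $a$-end is sealed because the excitation forces every visit to $a$ to be followed immediately by a step to $v_1$. The same reasoning applied inductively shows that, before the walk first reaches $a$, the vertices $v_{j+1},\dots,b$ and their attached paths are reachable from the lower part of the graph only across the edge $v_jv_{j+1}$. Consequently, starting afresh at $a$, the walk reaches $v_j$ for the first time after expected time exactly $T(a,v_j)$; from $v_j$ it then reaches $v_{j+1}$ before $a$ with probability $q_j$, and otherwise returns to $a$, from which point the strong Markov property (together with the fact that $a$ is a deterministic reset to $v_1$) gives an expected remaining time of again $T(a,v_{j+1})$. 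Writing this one-step renewal identity $T(a,v_{j+1})=T(a,v_j)+E+(1-q_j)T(a,v_{j+1})$, where $E\ge 0$ is the expected length of the excursion out of $v_j$, and discarding $E$ yields $q_j\,T(a,v_{j+1})\ge T(a,v_j)$, as required.

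It then remains to bound $q_j$, which I would do by treating the successive visits of the walk to $v_j$ as independent trials. At each visit the walker steps to $v_{j+1}$ with probability $\tfrac{1}{k+2}$ (an immediate success), and otherwise steps to $v_{j-1}$ or onto one of the $k$ level-$j$ paths, after which it runs until it either returns to $v_j$ or reaches $a$; by the structural fact it cannot reach $v_{j+1}$ in the interim. Hence $q_j=\tfrac{1}{1+(k+1)f}$, where $f$ is the probability that such an excursion reaches $a$ before returning to $v_j$. Each level-$j$ path is a path of length $j+1$ with $v_j$ and $a$ as endpoints whose internal vertices have degree $2$, so the walk along it is a simple random walk on $\{0,1,\dots,j+1\}$; labelling $v_j$ as $0$ and $a$ as $j+1$, the walker enters at $1$, and Proposition~\ref{p:int} gives that it reaches $a$ before $v_j$ with probability $\tfrac{1}{j+1}$. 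Therefore $f\ge \tfrac{k}{k+1}\cdot\tfrac{1}{j+1}$, whence $q_j\le \tfrac{j+1}{j+1+k}\le \tfrac{j+1}{k}\le \tfrac{4j}{k}$, with the factor $4$ leaving ample slack.

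I expect the main obstacle to be the structural and renewal argument of the second paragraph rather than any single computation: one must verify carefully that $a$ genuinely acts as a reset and that no excursion away from $v_j$ can reach $v_{j+1}$ without first returning to $v_j$ or hitting $a$, so that the hitting time decomposes cleanly as a geometric number of independent ``climbs,'' each costing at least $T(a,v_j)$ in expectation. Once this decomposition is justified, the gambler's-ruin estimate for $q_j$ is routine.
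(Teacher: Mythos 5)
Your proof is correct and follows essentially the same route as the paper's: both rest on the induction via $T(a,v_{j+1})=T(a,v_j)+T(v_j,v_{j+1})$, the structural fact that $v_{j+1}$ is first reached from $v_j$ because the excitation at $a$ seals the $a$-ends of the attached paths, and the gambler's-ruin estimate $1/(j+1)$ on the length-$(j+1)$ paths. The only difference is bookkeeping: you isolate the escape probability $q_j$ and bound it by a geometric-trials computation, whereas the paper solves the corresponding linear relations among $T(v_j,v_{j+1})$, $T(R_{j,j},v_{j+1})$ and $T(a,v_{j+1})$; both yield $T(a,v_{j+1})\ge \tfrac{k}{4j}\,T(a,v_j)$.
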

	\begin{proof}
		We will prove this lemma by induction. For $j=1$, we have $T(a,v_1)=1$ and the bound clearly holds.
		Now, assume the lemma holds for $j$ and note that $T(a,v_{j+1})=T(a,v_j)+T(v_j,v_{j+1})$, as we can only reach $v_{j+1}$ from $v_j$. We may then bound $T(v_j,v_{j+1})$ by
		\begin{align*}
			T(v_j,v_{j+1}) & =1+\frac{1}{k+2}T(v_{j-1},v_{j+1})+\frac{1}{k+2}T(v_{j+1},v_{j+1})+\frac{k}{k+2}T(R_{j,j},v_{j+1}) \\
			               & \geq\frac{k}{k+2}T(R_{j,j},v_{j+1})
		\end{align*}
		From Proposition~\ref{p:int}, it follows that the probability of walking from $R_{j,j}$ to $q_{j+1}$ before $a$ is ${j}/{(j+1)}$, and the complementary event has the probability ${1}/{(j+1)}$. We then see that \[T(R_{j,j},v_{j+1})\geq \frac{1}{j+1}T(a,v_{j+1})+\frac{j}{j+1}T(v_j,v_{j+1}).\]
		Using this bound, we obtain
		\begin{align*}
			T(v_j,v_{j+1})                          & \geq \frac{k}{k+2}\frac{1}{j+1}T(a,v_{j+1})+\frac{k}{k+2}\frac{j}{j+1} T(v_j,v_{j+1}), \text{ so } \\
			\frac{k+2j+2}{(k+2)(j+1)}T(v_j,v_{j+1}) & \geq \frac{k}{(k+2)(j+1)}T(a,v_{j+1}), \text{ whence }                                             \\
			T(v_j,v_{j+1})                          & \geq \frac{k}{k+2j+2}T(a,v_{j+1})
		\end{align*}
		Combining the above bound with the bound on $T(a,v_{j+1})$, we get
		\begin{align*}
			T(a,v_{j+1})                    & \geq T(a,v_{j})+\frac{k}{k+2j+2}T(a,v_{j+1}), \text{ so }    \\
			\frac{2j+2}{k+2j+2}T(a,v_{j+1}) & \geq T(a,v_j), \text{ whence }                               \\
			T(a,v_{j+1})                    & \geq \frac{k+2j+2}{2j+2}T(a,v_j) \geq \frac{k}{4j}T(a,v_{j})
		\end{align*}
		By the induction hypothesis, we now conclude that
		\[T(a,v_{j+1}) \geq \frac{k}{4j} \frac{k^{j-1}}{4^{j-1}\cdot (j-1)!}=\frac{k^{j}}{4^{j}\cdot j!};\]
		the result follows.
	\end{proof}

	From Lemma \ref{lem:expectation}, we conclude that $T(a,b) \geq {k^m}/{4^mm!}$; since $m = \lfloor \sqrt{k} \rfloor$, standard bounds for the factorial show that
	\[T(a,b) \geq \frac{1}{4} \left(\frac{\sqrt k}{4}\right)^{\sqrt k - 1}\]  and since $n = |V(G)|=\Theta(m^2 k)=\Theta(k^2)$, we deduce that
	\[T(a,b) =\Omega\left(\exp\left(\frac{\sqrt[4]{n} \log n}{100}\right)\right),\]
	proving the result.
\end{proof}

Next, we present the (slightly more involved) proof of Theorem~\ref{thm:bounded}.

\begin{proof}[Proof of Theorem~\ref{thm:bounded}]
	To prove the result, we build an infinite family of graphs as follows. We fix $m\in\N$, and consider a graph $G$ constructed as follows: as before, we start with a path of length $m+1$ between $a$ and $b$, say $a,v_1,v_2\dots,v_m,b$, and then attach a path of length $2m + 2$ to each $v_i$, and finally chain the ends of these paths to $a$ by another path as shown in Figure~\ref{fig:bounded}. Formally, we set
	\[V(G)=\{a,b\}\cup \{v_1, v_2,\dots,v_m\}\cup \{s_1, s_2, \dots, s_m\}\cup\bigcup_{j=1}^{2m+1}\bigcup_{i=1}^{m}\{r_{i,j}\}\]
	and specify $E(G)$ as follows:
	\begin{itemize}
		\item $\forall i\in [m-1]:\{v_i,v_{i+1}\}\in E(G) \land \{s_i,s_{i+1}\}\in E(G)$,
		\item $\forall j\in[2m], \forall i\in[m]: \{r_{i,j}, r_{i,j+1}\}\in E (G) \land \{r_{i,1},s_i\}\in E(G) \land \{r_{i,2m+1}, v_{i}\}\in E(G)$,
		\item $\{a,v_1\}\in E(G)$, $\{v_m, b\}\in E(G)$, and $\{a,s_1\}\in E(G)$.
	\end{itemize}
	We consider the geodesic-biased random walk on this graph with target $b$ and $\XX =  \{a, s_1, s_2, \dots, s_m\}$. Notice that our choice of path lengths ensures that the random walker moves deterministically from $s_i$ to $s_{i-1}$ (or to $a$ in the case of $s_1$), and from $a$ to $v_1$.

	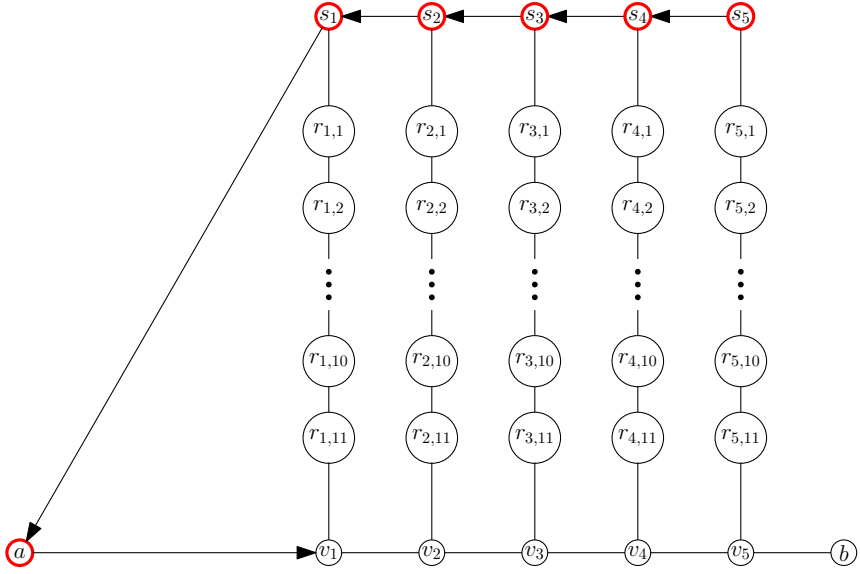
\begin{figure}
		\begin{center}
			\tikzstyle{ipe stylesheet} = [
  ipe import,
  even odd rule,
  line join=round,
  line cap=butt,
  ipe pen normal/.style={line width=0.4},
  ipe pen heavier/.style={line width=0.8},
  ipe pen fat/.style={line width=1.2},
  ipe pen ultrafat/.style={line width=2},
  ipe pen normal,
  ipe mark normal/.style={ipe mark scale=3},
  ipe mark large/.style={ipe mark scale=5},
  ipe mark small/.style={ipe mark scale=2},
  ipe mark tiny/.style={ipe mark scale=1.1},
  ipe mark normal,
  /pgf/arrow keys/.cd,
  ipe arrow normal/.style={scale=7},
  ipe arrow large/.style={scale=10},
  ipe arrow small/.style={scale=5},
  ipe arrow tiny/.style={scale=3},
  ipe arrow normal,
  /tikz/.cd,
  ipe arrows, 
  <->/.tip = ipe normal,
  ipe dash normal/.style={dash pattern=},
  ipe dash dashed/.style={dash pattern=on 4bp off 4bp},
  ipe dash dotted/.style={dash pattern=on 1bp off 3bp},
  ipe dash dash dotted/.style={dash pattern=on 4bp off 2bp on 1bp off 2bp},
  ipe dash dash dot dotted/.style={dash pattern=on 4bp off 2bp on 1bp off 2bp on 1bp off 2bp},
  ipe dash normal,
  ipe node/.append style={font=\normalsize},
  ipe stretch normal/.style={ipe node stretch=1},
  ipe stretch normal,
  ipe opacity 10/.style={opacity=0.1},
  ipe opacity 30/.style={opacity=0.3},
  ipe opacity 50/.style={opacity=0.5},
  ipe opacity 75/.style={opacity=0.75},
  ipe opacity opaque/.style={opacity=1},
  ipe opacity opaque,
]
\definecolor{red}{rgb}{1,0,0}
\definecolor{green}{rgb}{0,1,0}
\definecolor{blue}{rgb}{0,0,1}
\definecolor{yellow}{rgb}{1,1,0}
\definecolor{orange}{rgb}{1,0.647,0}
\definecolor{gold}{rgb}{1,0.843,0}
\definecolor{purple}{rgb}{0.627,0.125,0.941}
\definecolor{gray}{rgb}{0.745,0.745,0.745}
\definecolor{brown}{rgb}{0.647,0.165,0.165}
\definecolor{navy}{rgb}{0,0,0.502}
\definecolor{pink}{rgb}{1,0.753,0.796}
\definecolor{seagreen}{rgb}{0.18,0.545,0.341}
\definecolor{turquoise}{rgb}{0.251,0.878,0.816}
\definecolor{violet}{rgb}{0.933,0.51,0.933}
\definecolor{darkblue}{rgb}{0,0,0.545}
\definecolor{darkcyan}{rgb}{0,0.545,0.545}
\definecolor{darkgray}{rgb}{0.663,0.663,0.663}
\definecolor{darkgreen}{rgb}{0,0.392,0}
\definecolor{darkmagenta}{rgb}{0.545,0,0.545}
\definecolor{darkorange}{rgb}{1,0.549,0}
\definecolor{darkred}{rgb}{0.545,0,0}
\definecolor{lightblue}{rgb}{0.678,0.847,0.902}
\definecolor{lightcyan}{rgb}{0.878,1,1}
\definecolor{lightgray}{rgb}{0.827,0.827,0.827}
\definecolor{lightgreen}{rgb}{0.565,0.933,0.565}
\definecolor{lightyellow}{rgb}{1,1,0.878}
\definecolor{black}{rgb}{0,0,0}
\definecolor{white}{rgb}{1,1,1}
\begin{tikzpicture}[ipe stylesheet, scale=0.6, every node/.style={scale=0.6}]
  \draw
    (224, 456) circle[radius=8];
  \node[ipe node, anchor=center, font=\large]
     at (224, 456) {$v_1$};
  \draw
    (480, 456) circle[radius=8];
  \node[ipe node, anchor=center, font=\large]
     at (480, 456) {$v_5$};
  \draw
    (288, 456) circle[radius=8];
  \node[ipe node, anchor=center, font=\large]
     at (288, 456) {$v_2$};
  \draw
    (352, 456) circle[radius=8];
  \node[ipe node, anchor=center, font=\large]
     at (352, 456) {$v_3$};
  \draw
    (416, 456) circle[radius=8];
  \node[ipe node, anchor=center, font=\large]
     at (416, 456) {$v_4$};
  \draw
    (544, 456) circle[radius=8];
  \node[ipe node, anchor=center, font=\large]
     at (544, 456) {$b$};
  \node[ipe node, anchor=center, font=\large]
     at (224, 528) {$r_{1,11}$};
  \draw
    (232, 456)
     -- (280, 456);
  \draw
    (296, 456)
     -- (344, 456);
  \draw
    (360, 456)
     -- (408, 456);
  \draw
    (424, 456)
     -- (472, 456);
  \draw
    (488, 456)
     -- (536, 456);
  \node[ipe node, anchor=center, font=\large]
     at (224, 576) {$r_{1,10}$};
  \node[ipe node, anchor=center, font=\large]
     at (224, 672) {$r_{1,2}$};
  \node[ipe node, anchor=center, font=\large]
     at (224, 720) {$r_{1,1}$};
  \draw[red, ipe pen fat]
    (224, 792) circle[radius=8];
  \node[ipe node, anchor=center, font=\large]
     at (224, 792) {$s_1$};
  \draw
    (480, 792) circle[radius=8];
  \node[ipe node, anchor=center, font=\large]
     at (480, 792) {$s_5$};
  \draw
    (288, 792) circle[radius=8];
  \node[ipe node, anchor=center, font=\large]
     at (288, 792) {$s_2$};
  \draw
    (352, 792) circle[radius=8];
  \node[ipe node, anchor=center, font=\large]
     at (352, 792) {$s_3$};
  \draw
    (416, 792) circle[radius=8];
  \node[ipe node, anchor=center, font=\large]
     at (416, 792) {$s_4$};
  \draw[<-]
    (232, 792)
     -- (280, 792);
  \draw[<-]
    (296, 792)
     -- (344, 792);
  \draw[<-]
    (360, 792)
     -- (408, 792);
  \draw[<-]
    (424, 792)
     -- (472, 792);
  \draw[red, ipe pen fat]
    (32, 456) circle[radius=8];
  \node[ipe node, anchor=center, font=\large]
     at (32, 456) {$a
$};
  \draw[->]
    (40, 456)
     -- (216, 456);
  \draw[->]
    (220.031, 785.054)
     -- (35.9691, 462.946);
  \draw
    (224, 512.528)
     -- (224, 464);
  \draw[red, ipe pen fat]
    (288, 792) circle[radius=8];
  \draw[red, ipe pen fat]
    (352, 792) circle[radius=8];
  \draw[red, ipe pen fat]
    (416, 792) circle[radius=8];
  \draw[red, ipe pen fat]
    (480, 792) circle[radius=8];
  \pic
     at (224, 624) {ipe disk};
  \pic
     at (224, 632) {ipe disk};
  \pic
     at (224, 616) {ipe disk};
  \draw
    (224, 528) circle[radius=16];
  \draw
    (224, 576) circle[radius=16];
  \draw
    (224, 560)
     -- (224, 544)
     -- (224, 544);
  \draw
    (224, 592)
     -- (224, 608);
  \draw
    (224, 672) circle[radius=16];
  \draw
    (224, 720) circle[radius=16];
  \draw
    (224, 656)
     -- (224, 640);
  \draw
    (224, 688)
     -- (224, 704);
  \draw
    (224, 784)
     -- (224, 736);
  \node[ipe node, anchor=center, font=\large]
     at (288, 528) {$r_{2,11}$};
  \node[ipe node, anchor=center, font=\large]
     at (288, 576) {$r_{2,10}$};
  \node[ipe node, anchor=center, font=\large]
     at (288, 672) {$r_{2,2}$};
  \node[ipe node, anchor=center, font=\large]
     at (288, 720) {$r_{2,1}$};
  \draw
    (288, 512.528)
     -- (288, 464);
  \pic
     at (288, 624) {ipe disk};
  \pic
     at (288, 632) {ipe disk};
  \pic
     at (288, 616) {ipe disk};
  \draw
    (288, 528) circle[radius=16];
  \draw
    (288, 576) circle[radius=16];
  \draw
    (288, 560)
     -- (288, 544)
     -- (288, 544);
  \draw
    (288, 592)
     -- (288, 608);
  \draw
    (288, 672) circle[radius=16];
  \draw
    (288, 720) circle[radius=16];
  \draw
    (288, 656)
     -- (288, 640);
  \draw
    (288, 688)
     -- (288, 704);
  \draw
    (288, 784)
     -- (288, 736);
  \node[ipe node, anchor=center, font=\large]
     at (352, 528) {$r_{3,11}$};
  \node[ipe node, anchor=center, font=\large]
     at (352, 576) {$r_{3,10}$};
  \node[ipe node, anchor=center, font=\large]
     at (352, 672) {$r_{3,2}$};
  \node[ipe node, anchor=center, font=\large]
     at (352, 720) {$r_{3,1}$};
  \draw
    (352, 512.528)
     -- (352, 464);
  \pic
     at (352, 624) {ipe disk};
  \pic
     at (352, 632) {ipe disk};
  \pic
     at (352, 616) {ipe disk};
  \draw
    (352, 528) circle[radius=16];
  \draw
    (352, 576) circle[radius=16];
  \draw
    (352, 560)
     -- (352, 544)
     -- (352, 544);
  \draw
    (352, 592)
     -- (352, 608);
  \draw
    (352, 672) circle[radius=16];
  \draw
    (352, 720) circle[radius=16];
  \draw
    (352, 656)
     -- (352, 640);
  \draw
    (352, 688)
     -- (352, 704);
  \draw
    (352, 784)
     -- (352, 736);
  \node[ipe node, anchor=center, font=\large]
     at (416, 528) {$r_{4,11}$};
  \node[ipe node, anchor=center, font=\large]
     at (416, 576) {$r_{4,10}$};
  \node[ipe node, anchor=center, font=\large]
     at (416, 672) {$r_{4,2}$};
  \node[ipe node, anchor=center, font=\large]
     at (416, 720) {$r_{4,1}$};
  \draw
    (416, 512.528)
     -- (416, 464);
  \pic
     at (416, 624) {ipe disk};
  \pic
     at (416, 632) {ipe disk};
  \pic
     at (416, 616) {ipe disk};
  \draw
    (416, 528) circle[radius=16];
  \draw
    (416, 576) circle[radius=16];
  \draw
    (416, 560)
     -- (416, 544)
     -- (416, 544);
  \draw
    (416, 592)
     -- (416, 608);
  \draw
    (416, 672) circle[radius=16];
  \draw
    (416, 720) circle[radius=16];
  \draw
    (416, 656)
     -- (416, 640);
  \draw
    (416, 688)
     -- (416, 704);
  \draw
    (416, 784)
     -- (416, 736);
  \node[ipe node, anchor=center, font=\large]
     at (480, 528) {$r_{5,11}$};
  \node[ipe node, anchor=center, font=\large]
     at (480, 576) {$r_{5,10}$};
  \node[ipe node, anchor=center, font=\large]
     at (480, 672) {$r_{5,2}$};
  \node[ipe node, anchor=center, font=\large]
     at (480, 720) {$r_{5,1}$};
  \draw
    (480, 512.528)
     -- (480, 464);
  \pic
     at (480, 624) {ipe disk};
  \pic
     at (480, 632) {ipe disk};
  \pic
     at (480, 616) {ipe disk};
  \draw
    (480, 528) circle[radius=16];
  \draw
    (480, 576) circle[radius=16];
  \draw
    (480, 560)
     -- (480, 544)
     -- (480, 544);
  \draw
    (480, 592)
     -- (480, 608);
  \draw
    (480, 672) circle[radius=16];
  \draw
    (480, 720) circle[radius=16];
  \draw
    (480, 656)
     -- (480, 640);
  \draw
    (480, 688)
     -- (480, 704);
  \draw
    (480, 784)
     -- (480, 736);
\end{tikzpicture}
			\caption{The bounded-degree construction with $m = 5$.}
			\label{fig:bounded}
		\end{center}
	\end{figure}

	\begin{lemma} We have $T(v_{1},b)\geq {\exp(\sqrt{m}/10)}/({m^{3/2}+1})$.
	\end{lemma}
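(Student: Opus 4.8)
The plan is to reduce the geodesic-biased walk to a simple random walk with resets on the backbone $v_1, v_2, \dots, v_m, b$, and then to show that the probability of a single ``sweep'' reaching $b$ is exponentially small in $\sqrt m$.

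First I would analyse the local dynamics at a single $v_i$. From an interior $v_i$ the walker moves with probability $1/3$ to each of $v_{i-1}$, $v_{i+1}$ and the attached path $r_{i,2m+1}, \dots, r_{i,1}, s_i$. The key point is that, once the walker steps onto this path, she performs an ordinary simple random walk on a path of length $2m+2$ with endpoints $v_i$ and $s_i$, so by Proposition~\ref{p:int} she reaches $s_i$ before returning to $v_i$ with probability exactly $1/(2m+2)$; and upon reaching the excited vertex $s_i$ she is driven deterministically through $s_{i-1}, \dots, s_1, a$ back to $v_1$. Using the strong Markov property to integrate out the excursions into the attached paths (and the bounces $v_1 \to a \to v_1$), the walk, watched only on $\{v_1, \dots, v_m, b\}$, behaves as follows: from an interior $v_i$ it moves to each of $v_{i-1}, v_{i+1}$ with probability $q = (2m+2)/(4m+5)$ and ``resets'' to $v_1$ with probability $r = 1/(4m+5)$, where $2q + r = 1$; from $v_1$ it always eventually advances to $v_2$; and from $v_m$ it reaches the target $b$ with probability $q$.

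Next I would estimate the probability $h_i$ that this reduced walk, started at $v_i$, reaches $b$ before returning to $v_1$. A first-step analysis gives the recurrence $h_i = q(h_{i-1} + h_{i+1})$ for $2 \le i \le m$, subject to $h_1 = 0$ and $h_{m+1} = 1$. This is a linear recurrence with characteristic equation $q\lambda^2 - \lambda + q = 0$, whose roots are $\lambda$ and $\lambda^{-1}$ with $\lambda = (1 + \sqrt{2r - r^2})/(1 - r) > 1$; solving with the boundary conditions yields the closed form $h_2 = (\lambda^2 - 1)/(\lambda^{m+1} - \lambda^{-(m-1)})$. The heart of the argument is the quantitative estimate of this quantity: since $r = \Theta(1/m)$ we have $\log \lambda \ge \sqrt{2r}\,(1 + o(1)) = \Theta(1/\sqrt m)$, so that $\lambda^{m+1} \ge \exp(\Theta(\sqrt m))$, while $\lambda^2 - 1 = \Theta(1/\sqrt m)$; careful but elementary bookkeeping of these two estimates gives $1/h_2 \ge \exp(\sqrt m/10)/(m^{3/2} + 1)$, with plenty of room to spare, the true exponent being close to $\sqrt m/\sqrt 2$.

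Finally, to pass from this hitting probability to the expected hitting time, I would observe that each excursion in which the walker leaves $v_1$ and advances to $v_2$ reaches $b$ before returning to $v_1$ with probability exactly $h_2$, independently of the past; hence the number of such excursions before absorption at $b$ is geometric with mean $1/h_2$, and since each failed excursion costs at least one step we obtain $T(v_1, b) \ge 1/h_2 - 1$, which exceeds the claimed bound. I expect the main obstacle to be the middle step: pinning down the effective reset probability so that the induced drift towards $v_1$ is of order $1/\sqrt m$ rather than $1/m$ (the latter would yield only a polynomial bound), and then extracting the clean exponential lower bound on $1/h_2$ from the closed form. By contrast, the reduction in the first step and the geometric-attempts argument in the last step are routine once the local analysis at each $v_i$ is set up correctly.
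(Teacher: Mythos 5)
Your proposal is correct, and the first and last steps coincide with the paper's: the paper performs exactly the same reduction to the induced chain on $a, v_1, \dots, v_m, b$, computes the same retracing probability $\varepsilon = r = 1/(4m+5)$ via the same gambler's-ruin calculation on the attached path, and uses the same renewal structure (there phrased as $T(v_1,b) \ge 1/p$ with $p$ the probability of reaching $b$ before $a$ from $v_1$). Where you genuinely diverge is the key estimate of this success probability. The paper never solves the recurrence: it splits $p = p_s + p_l$ according to whether the induced chain is absorbed within $m^{3/2}$ steps, bounds the long excursions by the probability of never retracing, $p_l \le (1-\varepsilon)^{m^{3/2}} \le \exp(-\sqrt m/10)$, and bounds each short excursion by conditioning on no retrace (under which the chain is a simple random walk on $\Z$) and applying the Chernoff bound of Proposition~\ref{p:chernoff} to get $p(t) \le \exp(-m^2/(4t+2m)) \le \exp(-\sqrt m/10)$ for $t \le m^{3/2}$; summing gives $p \le (m^{3/2}+1)\exp(-\sqrt m/10)$, which is exactly where the $m^{3/2}+1$ in the statement comes from. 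You instead solve the killed gambler's-ruin recurrence $h_i = q(h_{i-1}+h_{i+1})$ exactly and read off the decay from the characteristic root $\lambda = 1 + \sqrt{2r}\,(1+o(1))$. Both are valid; your computations check out ($q = (2m+2)/(4m+5)$, the roots multiply to $1$, and $h_2 = (\lambda^2-1)/(\lambda^{m+1}-\lambda^{-(m-1)})$ is the correct closed form), and your route yields the sharper exponent $\sqrt{m/2}$ at the cost of some asymptotic bookkeeping for $\lambda$ (valid for $m$ large, which suffices for the theorem). The paper's excursion-splitting is lossier but avoids the exact solution and the asymptotics of the characteristic root entirely.
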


	\begin{proof}
		We proceed via a renewal argument. Observe that $T(v_{1},b)\geq 1+q\cdot T(v_{1},b)$, where $q$ is the probability of the event that the random walker visits $a$ before $b$ after leaving $v_1$. It will be more convenient to work with the complementary event, namely, that the random walker visits $b$ before $a$ after leaving $v_1$; we write $p=1-q$ for the probability of this event. From the previous inequality, we then have $T(v_{1},b)\geq {1}/{(1-q)}=1/p$.

		Now, we shall estimate $p$, the probability that the geodesic-biased walk starting at $v_1$ hits $b$ before $a$. To do so, we consider the Markov chain $(x_t)_{t\ge 0}$ induced by the geodesic-biased walk on the states $a, v_1, \dots, v_m, b$ with $a$ and $b$ being absorbing; of course, $p$ is exactly the probability that this induced chain started at $v_1$ reaches the absorbing state $b$ before it hits the absorbing state $a$.

		For each non-absorbing state $v_i$, there are three possibilities for the next state of the induced chain hit by the random-walker: $v_{i-1}$, $v_{i+1}$ or $a$. The probabilities of these transitions are as follows: we write $\varepsilon$ for the probability of returning to $a$ via $s_i$, and note that the other two transitions have the same probability, i.e.,
		\[\Prob[x_{t+1}=v_{i+1}\cond x_t=v_i]=\Prob[x_{t+1}=v_{i-1} \cond x_t=v_i]=\frac{1-\varepsilon}{2}.\]
		We may calculate $\varepsilon$, the probability of \emph{retracing}, i.e., returning to $a$ via $s_i$, as follows. The probability of reaching $s_i$ before $v_i$ starting from $r_{i,2k+1}$ is, by Proposition~\ref{p:int}, exactly ${1}/{(2m+2)}$. It then follows that $\varepsilon=\tfrac{1}{3}(\frac{2m+1}{2m+2}\varepsilon + \tfrac{1}{2m+2})$, from which we get $\varepsilon={1}/{(4m+5)}$.

		We shall estimate $p=p_s+p_l$ by separately estimating $p_s$, the probability of the chain hitting $b$ before $a$ starting from $v_1$ in at most $m^{3/2}$ steps, and $p_l$, the probability of the chain hitting $b$ before $a$ starting from $v_1$ and taking more than $m^{3/2}$ steps to do so.

		First, we dispose of `long' excursions. We claim that $p_l\leq (1-\varepsilon)^{m^{3/2}}$; indeed, if the chain does not hit either of $a$ or $b$ in the first $m^{3/2}$ steps, then the chain does not, in particular, retrace on any of the first $m^{3/2}$ steps. Thus
		\[p_l\leq (1-\varepsilon)^{m^{3/2}}\leq \left(1-\frac{1}{4m+5}\right)^{m^{3/2}}\leq \exp\left(\frac{-\sqrt{m}}{10}\right).\]

		Next, we focus on the `short' excursions. Note that we may write $p_s=\sum_{t=0}^{m^{3/2}}p(t)$, where \[ p(t)=\Prob[\{x_t=b\}\land \{\forall\,1\leq i<t: (x_i\neq a\land x_i\neq b)\}].\] We may then bound $p(t)$ by conditioning on the chain never retracing to get
		\[ p(t)\leq \Prob [\{x_t=b\}\land \{\forall\,1\leq i<t: (x_i\neq a\land x_i\neq b)\} \cond \text{No Retrace}].\]
		This upper bound may be interpreted in terms of the simple random walk on the integers; indeed, conditional on never retracing, the chain is isomorphic to the simple random walk on the integer line. Concretely, consider the simple random walk $\{y_t\}_{t\ge=0}$ on the integers and note that
		\begin{align*}
			\Prob  [\{x_t=b\} & \land \{\forall\,1\leq i<t: (x_i\neq a\land x_i\neq b)\} \cond \text{No Retrace}]           \\
			                  & = \Prob[\{y_0=1\land y_t = m+1\}\land \{\forall 1\leq i<t: (y_i\neq 0\land y_i\neq m+1) \}] \\
			                  & \leq \Prob[\{y_0=1\land y_t=m+1\}]\leq \Prob[\{y_0=1\land y_t\geq m+1\}].
		\end{align*}
		The last probability above is easy to estimate since the simple random walk on the integers may be viewed as a sum of independent Bernoulli random variables, so by applying Proposition \ref{p:chernoff} (with $\delta={m}/{t}$) to such a representation of the random walk on the integers, we obtain
		\[\Prob[\{y_0=1\land y_t\geq m+1\}] \le \exp\left(\frac{-m^2}{4t+2m}\right) \le \exp\left(\frac{-\sqrt{m}}{10}\right),\]
		where the second inequality holds for all $t\leq m^{3/2}$. Consequently, we have
		\[p_s \le m^{3/2} \exp\left(\frac{-\sqrt{m}}{10}\right).\]

		Combining the above estimates for $p_s$ and $p_l$ and the fact that $T(v_1, b) \ge 1/(p_s + p_l)$ now yields the required bound.
	\end{proof}

	The theorem immediately follows from the above lemma. Indeed, $T(a,b) = 1 + T(v_1, b)$, and writing the above bound for $T(v_1, b)$ in terms of $n = |V(G)| = 2+m(2m+3)$ proves the result.
\end{proof}

\section{Conclusion}\label{s:conc}
Our results raise a few different natural questions; we discuss two such problems below.

There remains the question of determining the right order of uniform bound for the expected hitting time of a fixed target in the geodesic-biased walk: we have shown that on a connected $n$-vertex graph, this may be as large as $\exp(n^{1/4}\log n/100)$, while it is more or less trivial to show a uniform upper bound of $\exp(n\log n)$; it would be interesting to close this gap and pin down the truth.

Another problem that we have been unable to resolve concerns bounded-degree graphs. While we have exhibited exponential slowdown for the geodesic-biased walk on bounded-degree graphs, our constructions nonetheless require an unbounded number of excitations, which leads to the following: in the geodesic-biased walk on a bounded-degree graph with a bounded number of excitations, is there a uniform polynomial bound on the expected hitting time of the fixed target?

\section*{Acknowledgements}
The first, second and fourth authors were supported by H2020-MSCA-RISE project CoSP 823748, and the third author wishes to acknowledge support from NSF grant DMS-1800521. Much of this work was carried out when the first, second and fourth authors were participants in the DIMACS REU supported by NSF grant CCF-1852215; we are grateful for the hospitality of the DIMACS Center.
\bibliographystyle{amsplain}
\bibliography{geodesic_walk}
\end{document}